\documentclass[12pt,leqno]{amsart}
%%%%%%%%%%%%%%%%%%%%%%%%%%%%%%%%%%%%%%%%%%%%%%%%%%%%%%%%%%%%%%%%%%%%%%%%%%%%%%%%%%%%%%%%%%%%%%%%%%%%%%%%%%%%%%%%%%%%%%%%%%%%%%%%%%%%%%%%%%%%%%%%%%%%%%%%%%%%%%%%%%%%%%%%%%%%%%%%%%%%%%%%%%%%%%%%%%%%%%%%%%%%%%%%%%%%%%%%%%%%%%%%%%%%%%%%%%%%%%%%%%%%%%%%%%%%
\usepackage{amssymb,amsfonts,amsthm,setspace,indentfirst}

\setcounter{MaxMatrixCols}{10}
%TCIDATA{OutputFilter=Latex.dll}
%TCIDATA{Version=5.50.0.2953}
%TCIDATA{<META NAME="SaveForMode" CONTENT="1">}
%TCIDATA{BibliographyScheme=Manual}
%TCIDATA{LastRevised=Tuesday, January 04, 2022 07:15:14}
%TCIDATA{<META NAME="GraphicsSave" CONTENT="32">}

\topmargin=0.35cm \textwidth = 16cm \textheight = 24cm
\setlength{\oddsidemargin}{0.5pt}
\setlength{\evensidemargin}{0.5pt}

\newtheorem{theo}{Theorem}[section]

\newtheorem{lem}{Lemma}[section]
\newtheorem{pr}{Proposition}[section]

\newcommand{\be}{\begin{equation}}
\newcommand{\ee}{\end{equation}}
\newcommand{\bea}{\begin{eqnarray}}
\newcommand{\eea}{\end{eqnarray}}
\newcommand{\beb}{\begin{eqnarray*}}
\newcommand{\eeb}{\end{eqnarray*}}

\numberwithin{equation}{section}

\begin{document}
\title{Extended weakly symmetric manifolds}
\author{$^{1}$K. K. Baishya, $^{2}$S. K. Jana, $^{3}$M. R. Bakshi, $^{4}$M.
Pain and $^{4}$H. Kundu }
\date{}

\begin{abstract}
In this work, we have introduced and studied some basic geometric properties of extended weakly symmetric
spaces. After classification of this structure we have also established the existence of such a space by presenting a non-trivial example.
\end{abstract}
\maketitle
%%%%%%%%%%%%%%%%%%%%%%%%%%%%%%%%%%%%%%%%%%%%%%%%%%%%%%%%%%%%%%%%%%%%%%%%%%%%%%%%%%%%%%%%%%%%%%%%%%%%%
%%%%%%%%%%%%%%%%%%%%%%%%%%%%%%%%%%%%%%%%%%%%%%%%%%%%%%%%%%%%%%%%%%%%%%%%%%%%%%%%%%%%%%%%%%%%%%%%%%%%%

\footnotetext{%
Mathematics Subject Classification 2020: 53C15, 53C25.
\par
Key Words: extended weakly symmetric space, generalized weakly symmetric
space, quasi-Einstein, Robertson-Walker spacetime, perfect fluid spacetime.} 
%%%%%%%%%%%%%%%%%%%%%%%%%%%%%%%%%%%%%%%%%%%%%%%%%%%%%%%%%%%%%%%%%%%%%%%%%%%%%%%%%%%%%%%%%%%%%%%%%%%%%

\section{Introduction}

%%%%%%%%%%%%%%%%%%%%%%%%%%%%
The modern trend of Mathematics is abstraction, generalization, extension
and then applications. Keeping this in our mind we have ideated the notion
of `Extended weakly symmetric manifolds', which extends the notion of
`Generalized weakly symmetric manifolds' \cite{Baishya2} as well as
`Hyper-generalized weakly symmetric manifolds' \cite{Baishya9}, as follows:%
\newline
An $n$-dimensional Riemannian manifold will be termed as \textit{extended
weakly symmetric manifold} (which will be abbreviated hereafter as $(EWS)_{n}
$) if it bears the equation%
\begin{eqnarray}
&&(\nabla
_{X_{5}}R)(X_{1},X_{2},X_{3},X_{4})=A(X_{5})R(X_{1},X_{2},X_{3},X_{4})
\label{ewsng} \\
&&+B(X_{1})R(X_{5},X_{2},X_{3},X_{4})+\bar{B}%
(X_{2})R(X_{1},X_{5},X_{3},X_{4})  \notag \\
&&+D(X_{3})R(X_{1},X_{2},X_{5},X_{4})+\bar{D}%
(X_{4})R(X_{1},X_{2},X_{3},X_{5})  \notag \\
&&+\alpha (X_{5})H(X_{1},X_{2},X_{3},X_{4})+\beta
(X_{1})H(X_{5},X_{2},X_{3},X_{4})  \notag \\
&&+\bar{\beta}(X_{2})H(X_{1},X_{5},X_{3},X_{4})+\gamma
(X_{3})H(X_{1},X_{2},X_{5},X_{4})  \notag \\
&&+\bar{\gamma}(X_{4})H(X_{1},X_{2},X_{3},X_{5})+\theta
(X_{5})G(X_{1},X_{2},X_{3},X_{4})  \notag \\
&&+\phi (X_{1})G(X_{5},X_{2},X_{3},X_{4})+\bar{\phi}%
(X_{2})G(X_{1},X_{5},X_{3},X_{4})  \notag \\
&&+\psi (X_{3})G(X_{1},X_{2},X_{5},X_{4})+\bar{\psi}%
(X_{4})G(X_{1},X_{2},X_{3},X_{5}),  \notag
\end{eqnarray}%
where $R$ is the Riemann curvature tensor, $H=g\wedge S$, $G=g\wedge g$, the
wedge product $q\wedge p$ of two $(0,2)$-tensors is defined by 
\begin{eqnarray*}
(q\wedge p)(X_{1},X_{2},X_{3},X_{4})
&=&q(X_{1},X_{4})p(X_{2},X_{3})-q(X_{1},X_{3})p(X_{2},X_{4}) \\
&&+p(X_{1},X_{4})q(X_{2},X_{3})-p(X_{1},X_{3})q(X_{2},X_{4})
\end{eqnarray*}%
and ($A,B,\bar{B},D,\bar{D},\alpha ,\beta ,\bar{\beta},\gamma ,\bar{\gamma}%
,\theta ,\phi ,\bar{\phi},\psi ,\bar{\psi})$ are non-zero 1-forms.\newline
The beauty of such space is that it has the taste of

(i) symmetric space \cite{Cartan} for ($0,0,0,0,0,0,0,0,0,0,0,0,0,0,0)$,

(ii) recurrent space \cite{Waiker} for ($A,0,0,0,0,0,0,0,0,0,0,0,0,0,0)$,

(iii) hyper generalized recurrent space \cite{ShaikhP} for ($A,0,0,0,0,\alpha
,0,0,0,0,0,0,0,0,0)$,

(iv) generalized recurrent space \cite{Dubey} for ($A,0,0,0,0,0,0,0,0,0,%
\theta ,0,0,0,0)$,

(v) pseudo symmetric space \cite{Chaki1} for $\frac{A}{2}=B=D=\delta \neq 0 $%
, $\alpha =\beta =\gamma =0$,

(vi) generalized pseudo symmetric space \cite{Baishya1} for $\frac{A}{2}%
=B=D=\delta \neq 0$, $\frac{\alpha }{2}=\beta =\gamma =\mu $,

(vii) semi-pseudo symmetric space \cite{Tarafdar} for $B=D=\delta $, $%
A=\alpha =\beta =\gamma =0$,

(viii) generalized semi-pseudo symmetric space for $A=0=\alpha ,B=D=\delta $%
, $\beta =\gamma =\mu $,

(ix) almost pseudo symmetric space \cite{Chaki} for $A=E+H,B=D=H$, $\alpha
=\beta =\gamma =0$,

(x) almost generalized pseudo symmetric space (\cite{Baishya6}, \cite%
{Baishya7}, \cite{Baishya8}) for $A=E+H,B=D=H$, $\alpha =\lambda +\psi
,\beta =\gamma =\lambda $,

(xi) weakly symmetric space \cite{Tam} for $A,B,D\neq 0$, $\alpha =\beta
=\gamma =0$.

(xii) generalized weakly symmetric space (\cite{Baishya2}, \cite{Baishya3}, 
\cite{Baishya4}, \cite{Baishya5}) for $A,B,D\neq 0$, $\alpha =\beta
=\gamma =0$.

(xiii) hyper generalized semi-pseudo symmetric space (for $\bar{A}=\bar{%
\alpha}=0,\ \bar{B}=\bar{D}\neq 0$, $\bar{\beta}=\bar{\gamma}\neq 0$),

(xiv) hyper generalized pseudo symmetric space \cite{Baishya11} for $\bar{A%
}=2\bar{B}=2\bar{D},$ $\bar{\alpha}=2\bar{\beta}=2\bar{\gamma}$,

(xvi) almost hyper generalized pseudo symmetric space for $\bar{A}=\bar{B}%
+H_{1},H_{1}=\bar{B}=\bar{D}\neq 0$, $\bar{\alpha}=\bar{\beta}+H_{2},H_{2}=%
\bar{\beta}=\bar{\gamma}\neq 0$ and

(xvii) hyper generalized weakly symmetric space (\cite{Baishya9}, \cite%
{Baishya10}) for $\bar{A}=\bar{B}+H_{1},\ H_{1}=\bar{B}=\bar{D}\neq 0$, $%
\bar{\alpha}=\bar{\beta}+H_{2},\ H_{2}=\bar{\beta}=\bar{\gamma}\neq 0$.

Firstly we have put some basic results in the Section 2. Section 3 is
devoted to the discussion of general properties of a $(EWS)_{n}$. The last section is concerned with a
non-trivial example of $(EWS)_{n}$. 

%============================

\section{Some basic results}

%===========================

\begin{pr}
\label{pr3.1} Let $K$ be a generalized curvature tensor and $A$%
, $B$ be two 1-forms. Then 
\begin{eqnarray*}
&&A(X_5) K(X_1,X_2,X_3,X_4) + B(X_5) K(X_1,X_4,X_2,X_3) \\
&&= \left\{A(X_5)-\frac{1}{2}B(X_5)\right\} K(X_1,X_2,X_3,X_4).
\end{eqnarray*}
\end{pr}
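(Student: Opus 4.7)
The plan is to reduce the claim to a single algebraic identity in $K$ alone, namely
$$K(X_1, X_4, X_2, X_3) = -\tfrac{1}{2}\, K(X_1, X_2, X_3, X_4),$$
valid for every generalized curvature tensor. Once this is established, the proposition follows instantly: multiplying by $B(X_5)$ and adding $A(X_5)\, K(X_1, X_2, X_3, X_4)$ to both sides collects the coefficient $A(X_5) - \tfrac{1}{2} B(X_5)$ in front of $K(X_1, X_2, X_3, X_4)$. So the 1-forms $A$ and $B$ play no real role; the content is purely a symmetry statement about $K$.

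The natural tool is the first Bianchi identity. For a generalized curvature tensor, cycling the last three slots gives
$$K(X_1, X_2, X_3, X_4) + K(X_1, X_3, X_4, X_2) + K(X_1, X_4, X_2, X_3) = 0.$$
To collapse $K(X_1, X_4, X_2, X_3)$ to a scalar multiple of $K(X_1, X_2, X_3, X_4)$ via this three-term relation, it suffices to identify the two Bianchi ``partners'' $K(X_1, X_3, X_4, X_2)$ and $K(X_1, X_4, X_2, X_3)$ with each other, since substituting such an equality yields $K(X_1, X_2, X_3, X_4) + 2\, K(X_1, X_4, X_2, X_3) = 0$ and the target identity follows.

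The decisive step, and therefore the main obstacle, is establishing the equality of these two cyclic partners using only the skew-symmetry in the first pair, the skew-symmetry in the last pair, and the pair-exchange symmetry. I would chase the indices through a careful sequence of these three moves, since each move changes only one relationship and signs must be tracked exactly. Once that equality is in hand, the rest of the proof is pure substitution: plug into Bianchi, solve for $K(X_1, X_4, X_2, X_3)$, multiply by $B(X_5)$, and combine with the $A(X_5)$-term. If the symmetry-chasing step turns out to be more subtle than expected, the fallback would be to expand both sides of the desired scalar identity against a general basis of $(0,4)$-tensors with the generalized-curvature symmetries and verify the coefficient match component-wise.
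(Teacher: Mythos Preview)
Your reduction is exactly right: since $B$ is an arbitrary nonzero $1$-form, the proposition is equivalent to the bare identity
\[
K(X_1,X_4,X_2,X_3)=-\tfrac{1}{2}\,K(X_1,X_2,X_3,X_4)
\]
for an arbitrary generalized curvature tensor $K$. The paper offers no proof of this proposition at all, so there is nothing to compare your argument against; what remains is whether your plan can actually be carried out.

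It cannot. The ``decisive step'' you flag --- showing that the two Bianchi partners $K(X_1,X_3,X_4,X_2)$ and $K(X_1,X_4,X_2,X_3)$ coincide --- is false for a general algebraic curvature tensor, and no amount of chasing through the skew and pair-exchange symmetries will produce it. A quick counterexample: take $K=G=g\wedge g$ and orthonormal vectors $e_1,e_2$, with $X_1=X_2=e_1$, $X_3=X_4=e_2$. Then $K(X_1,X_2,X_3,X_4)=K(e_1,e_1,e_2,e_2)=0$ while $K(X_1,X_4,X_2,X_3)=K(e_1,e_2,e_1,e_2)=-2\neq 0$, so the target identity itself fails. More structurally, if the identity held for all arguments, then cycling $(X_2,X_3,X_4)\mapsto(X_3,X_4,X_2)$ three times gives $K=(-\tfrac{1}{2})^{3}K=-\tfrac{1}{8}K$, forcing $K=0$.

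So the gap is not in your execution but in the statement: as written, Proposition~\ref{pr3.1} is simply false for nonzero $K$ and nonzero $B$. Your ``fallback'' of checking components would have revealed this immediately. Since the paper neither proves nor (as far as one can see) uses this proposition, it is most likely an error that should be corrected or removed rather than proved.
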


%=================================================================

\begin{lem}
\label{lem3.1} Let $K$ be a generalized curvature tensor on a
semi-Riemannian manifold $M$. If 
\begin{equation}  \label{eq1}
A(X_1) K(X_2,X_5,X_3,X_4) + A(X_2) K(X_1,X_5,X_3,X_4) = 0
\end{equation}
then either $A=0$ or $K=0$.
\end{lem}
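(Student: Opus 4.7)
The plan is to exploit the obvious symmetry of the hypothesis \eqref{eq1} in $X_1$ and $X_2$ and then test it against a vector on which $A$ does not vanish. The argument is purely algebraic and pointwise, so I fix a point $p \in M$ and work there throughout. Notably, the generalized-curvature-tensor structure of $K$ is never invoked; only bilinearity in the entries matters.

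First, I substitute $X_1 = X_2 = X$ in \eqref{eq1}. The two terms coincide and yield $2A(X)K(X, X_5, X_3, X_4) = 0$, hence
\[
A(X)\,K(X, X_5, X_3, X_4) = 0 \qquad \text{for all } X, X_3, X_4, X_5.
\]

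Second, suppose $A_p \neq 0$, and choose $X_0 \in T_p M$ with $A(X_0) \neq 0$. The identity just obtained, specialized to $X = X_0$, forces $K(X_0, X_5, X_3, X_4) = 0$ for every $X_3, X_4, X_5$. Plugging $X_2 = X_0$ back into \eqref{eq1} then gives
\[
A(X_1)\,K(X_0, X_5, X_3, X_4) + A(X_0)\,K(X_1, X_5, X_3, X_4) = 0,
\]
whose first term vanishes by the previous step. Since $A(X_0) \neq 0$, we conclude $K(X_1, X_5, X_3, X_4) = 0$ for all $X_1, X_3, X_4, X_5$, i.e.\ $K_p = 0$. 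Thus at each point of $M$, either $A$ or $K$ vanishes, which is the required dichotomy.

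Since the argument reduces to one symmetrization followed by one substitution, there is really no substantive obstacle. The only thing worth flagging is the pointwise nature of the conclusion: at each $p$, one of $A_p$, $K_p$ is zero, which is the standard interpretation of such statements in this literature and is precisely the form in which the lemma will later be used to pin down the various 1-forms appearing in \eqref{ewsng}.
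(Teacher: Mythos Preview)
Your proof is correct and follows essentially the same approach as the paper: set $X_1=X_2$ to kill the cross terms, then substitute a vector on which $A$ is nonzero to force $K=0$. Your additional remarks---that only multilinearity (not the full generalized-curvature symmetries) is needed, and that the conclusion is really a pointwise dichotomy---are accurate refinements of what the paper states.
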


\noindent Proof: Let $A\ne 0$. So there exists a vector field $\xi$ such
that $A(\xi)\ne 0$. Then putting $X_1=X_2=\xi$ in \eqref{eq1}, we get 
\begin{equation*}
A(\xi) K(\xi,X_5,X_3,X_4) = 0 \ \ \Rightarrow \ \ K(\xi,X_5,X_3,X_4) = 0.
\end{equation*}
Now putting $X_1=\xi$ in \eqref{eq1}, we get 
\begin{equation*}
A(\xi) K(X_2,X_5,X_3,X_4)+A(X_2) K(\xi,X_5,X_3,X_4)=0,
\end{equation*}
which implies $K(X_2,X_5,X_3,X_4)=0$. This completes the proof. 
%==============================================================

\begin{pr}
\label{pr3.2} If on a semi-Riemannian manifold $M$ 
\begin{eqnarray}  \label{condG}
&&A(X_1) R(X_2,X_5,X_3,X_4) + A(X_2) R(X_1,X_5,X_3,X_4) \\
&&+ B(X_1) G(X_2,X_5,X_3,X_4) + B(X_2) G(X_1,X_5,X_3,X_4) = 0  \notag
\end{eqnarray}
holds, then either $M$ is of constant curvature or $A=B=0$.
\end{pr}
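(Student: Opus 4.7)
The strategy is to read the hypothesis as a linear dependence between generalized curvature tensors, contract it down to an expression of the form ``$K$ equals a $1$-form times $G(\xi,\cdot,\cdot,\cdot)$'', and then exploit the skew-symmetry built into $K$ to kill the $1$-form.

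First I would handle the degenerate cases. If $A=0$, then \eqref{condG} reduces to
\[
B(X_{1})\,G(X_{2},X_{5},X_{3},X_{4})+B(X_{2})\,G(X_{1},X_{5},X_{3},X_{4})=0,
\]
and since $G=g\wedge g$ is a non-vanishing generalized curvature tensor on any manifold of dimension $\ge 2$, Lemma~\ref{lem3.1} forces $B=0$. Symmetrically, if $B=0$, the same lemma applied to $R$ yields $A=0$ or $R=0$; in the latter case $M$ is already of constant curvature (zero). So I may assume $A\ne 0$.

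Next, fix a point $p$ and pick $\xi\in T_{p}M$ with $A(\xi)\ne 0$ and $g(\xi,\xi)\ne 0$; such a $\xi$ exists because $\ker A$ is a hyperplane while the non-null vectors form a dense open subset of $T_{p}M$. Setting $X_{1}=X_{2}=\xi$ in \eqref{condG} gives
\[
R(\xi,X_{5},X_{3},X_{4}) = -c\,G(\xi,X_{5},X_{3},X_{4}),\qquad c:=B(\xi)/A(\xi).
\]
Substituting this back into \eqref{condG} with just $X_{1}=\xi$ and dividing by $A(\xi)$, routine bookkeeping yields
\[
K(X_{2},X_{5},X_{3},X_{4}) = -f(X_{2})\,G(\xi,X_{5},X_{3},X_{4}),
\]
where $K:=R+cG$ is again a generalized curvature tensor and $f(X):=[B(X)-cA(X)]/A(\xi)$ is a $1$-form with $f(\xi)=0$.

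Finally, the skew-symmetry of $K$ in its first pair of entries, together with that of $G$, forces
\[
f(X_{2})\,G(\xi,X_{5},X_{3},X_{4}) + f(X_{5})\,G(\xi,X_{2},X_{3},X_{4})=0.
\]
Setting $X_{5}=X_{2}$ gives $f(X_{2})\,G(\xi,X_{2},X_{3},X_{4})=0$ for every $X_{2},X_{3},X_{4}$; a direct expansion of $G=g\wedge g$ shows, using $g(\xi,\xi)\ne 0$, that $G(\xi,X_{2},\cdot,\cdot)$ vanishes identically iff $X_{2}\in \mathbb{R}\xi$, and $f(\xi)=0$ covers that line. Thus $f\equiv 0$, $K=0$, and $R=-cG$, so $M$ has constant sectional curvature (a global constant when $n\ge 3$ by Schur's theorem). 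The main obstacle I anticipate is the semi-Riemannian bookkeeping needed to select $\xi$ so that $A(\xi)\ne 0$ and $g(\xi,\xi)\ne 0$ simultaneously; once this $\xi$ is in hand, the remainder is a mechanical chase through the Riemann symmetries.
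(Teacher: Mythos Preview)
Your argument is correct, but it follows a genuinely different route from the paper's. The paper proceeds by two index contractions of \eqref{condG}: contracting over $X_{1},X_{2}$ yields $R(X_{A},\cdot,\cdot,\cdot)+G(X_{B},\cdot,\cdot,\cdot)=0$, and a second contraction over $X_{1},X_{3}$ then forces the global relation $B=-\dfrac{r}{n(n-1)}A$. Substituting this back into \eqref{condG} produces exactly the hypothesis of Lemma~\ref{lem3.1} for the generalized curvature tensor $R-\dfrac{r}{n(n-1)}G$, and the lemma finishes the job. By contrast, you never contract: you work pointwise with a single non-null vector $\xi$ outside $\ker A$, extract $c=B(\xi)/A(\xi)$ directly, and then use only the skew-symmetry of $K=R+cG$ together with the explicit form of $G=g\wedge g$ to kill the residual $1$-form $f$. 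Your approach is more elementary---it never mentions the Ricci tensor or scalar curvature---and makes transparent why the semi-Riemannian signature causes no trouble. The paper's contraction method, on the other hand, identifies the proportionality constant intrinsically as $r/n(n-1)$ and, more importantly, is the template that scales to the next result (Proposition~\ref{pr3.3}), where a third tensor $H=g\wedge S$ enters and a bare choice of $\xi$ no longer suffices to disentangle the three pieces.
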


\noindent Proof: Contracting \eqref{condG} over $X_1$ and $X_2$, we get 
\begin{equation*}
R(X_A,X_5,X_3,X_4)+G(X_B,X_5,X_3,X_4)=0,
\end{equation*}
where $X_A, X_B$ denote the associated vector fields of $A$ and $B$
respectively. Again contraction over $X_{1}$ and $X_{3}$ on \eqref{condG},
we obtain 
\begin{eqnarray*}
&&\ \ B(X_2) \{g(X_4,X_5) - n g(X_4,X_5)\} - A(X_2) S(X_4,X_5) \\
&&\hspace{0.8cm} + R(X_A,X_4,X_2,X_5) + G(X_B,X_4,X_2,X_5) = 0 \\
&&\Rightarrow A(X_2) S(X_4,X_5) + (n-1) B(X_2) g(X_4,X_5) = 0 \\
&&\Rightarrow r A(X_2) + n(n-1) B(X_2) = 0.
\end{eqnarray*}
Now, using this value in \eqref{condG}, we get 
\begin{eqnarray*}
&&A(X_{1})\left[R (X_{2},X_{5},X_{3},X_{4})-\frac{r}{n(n-1)}G
(X_{2},X_{5},X_{3},X_{4})\right] \\
&&+A(X_{2})\left[R(X_{1},X_{5},X_{3},X_{4})-\frac{r}{n(n-1)}%
G(X_{1},X_{5},X_{3},X_{4})\right] =0.
\end{eqnarray*}%
Now by virtue of the Lemma \ref{lem3.1}, one can easily bring out either $%
A=0 $ or $M$ is of constant curvature. Moreover for the case of $A=0$, we
have $B=-\frac{r}{n(n-1)}A=0$. 
%=====================================================================================

\begin{pr}
\label{pr3.3} Let us suppose that on a semi-Riemannian manifold $M$ 
\begin{eqnarray}  \label{condH}
&&A(X_1) R(X_2,X_5,X_3,X_4) + A(X_2) R(X_1,X_5,X_3,X_4) \\
&&+ B(X_1) H(X_2,X_5,X_3,X_4) + B(X_2) H(X_1,X_5,X_3,X_4)  \notag \\
&&+ D(X_1) G(X_2,X_5,X_3,X_4) + D(X_2) G(X_1,X_5,X_3,X_4) = 0  \notag
\end{eqnarray}
holds, where $H=g\wedge S$. Then either $M$ is of constant curvature or
conformally flat or $A=B=D=0$ or $A=\frac{r}{n} B+D=0$.
\end{pr}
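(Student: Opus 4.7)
The plan is to mimic the strategy of Proposition \ref{pr3.2}: contract \eqref{condH} once to eliminate $R$ in favour of $S$ and $g$, then split on which linear combinations of $A, B, D$ survive, and finally invoke Lemma \ref{lem3.1} together with Proposition \ref{pr3.2} itself. Contracting \eqref{condH} over the pair $(X_3, X_5)$ and using the trace identities $\sum_i R(X,e_i,e_i,Y) = S(X,Y)$, $\sum_i H(X,e_i,e_i,Y) = (n-2) S(X,Y) + r\, g(X,Y)$ and $\sum_i G(X,e_i,e_i,Y) = 2(n-1) g(X,Y)$, the six-term identity collapses to
\begin{eqnarray*}
&& P(X_1) S(X_2,X_4) + P(X_2) S(X_1,X_4) \\
&& \qquad + Q(X_1) g(X_2,X_4) + Q(X_2) g(X_1,X_4) = 0,
\end{eqnarray*}
where $P := A + (n-2) B$ and $Q := rB + 2(n-1) D$. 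Call this reduced equation $(\ast)$. The argument then splits according as the 1-form $P$ vanishes identically or not.

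\emph{Case 1: $P \equiv 0$.} Then $(\ast)$ becomes $Q(X_1) g(X_2,X_4) + Q(X_2) g(X_1,X_4) = 0$, and a trace over $(X_1, X_2)$ forces $Q = 0$. Hence $A = -(n-2) B$ and $D = -\tfrac{r}{2(n-1)} B$, and \eqref{condH} rewrites as $B(X_1) N(X_2,X_5,X_3,X_4) + B(X_2) N(X_1,X_5,X_3,X_4) = 0$ with $N := -(n-2) R + H - \tfrac{r}{2(n-1)} G$, which is a generalized curvature tensor. Lemma \ref{lem3.1} leaves two subcases: $B = 0$, whence $A = B = D = 0$; or $N = 0$, which is precisely $R = \tfrac{1}{n-2} H - \tfrac{r}{2(n-1)(n-2)} G$, i.e., the vanishing of the Weyl conformal tensor, so $M$ is conformally flat.

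\emph{Case 2: $P \not\equiv 0$.} Two further traces of $(\ast)$, over $(X_1, X_2)$ and over $(X_2, X_4)$, combine (after eliminating the Ricci-type term) to give the scalar identity $rP + nQ = 0$, hence $Q = -\tfrac{r}{n} P$. Substituting back turns $(\ast)$ into $P(X_1) T(X_2, X_4) + P(X_2) T(X_1, X_4) = 0$ with $T := S - \tfrac{r}{n} g$. At any point where $P \ne 0$ the pointwise argument of Lemma \ref{lem3.1} (applied to the symmetric $(0,2)$-tensor $T$) forces $T = 0$, so $M$ is Einstein and $H = \tfrac{r}{n} G$. Inserting $H = \tfrac{r}{n} G$ in \eqref{condH} puts it into the exact form of the hypothesis of Proposition \ref{pr3.2} with the auxiliary 1-form $E := \tfrac{r}{n} B + D$ in place of $B$, namely
\begin{eqnarray*}
&& A(X_1) R(X_2, X_5, X_3, X_4) + A(X_2) R(X_1, X_5, X_3, X_4) \\
&& \qquad + E(X_1) G(X_2, X_5, X_3, X_4) + E(X_2) G(X_1, X_5, X_3, X_4) = 0.
\end{eqnarray*}
Proposition \ref{pr3.2} then yields either that $M$ has constant curvature, or $A = E = 0$, i.e., $A = \tfrac{r}{n} B + D = 0$.

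The main technical hurdle is isolating the clean combinations $P$ and $Q$ in the reduction $(\ast)$; once that is achieved, Case 1 is a routine application of Lemma \ref{lem3.1} (together with the recognition that $N$ is, up to the factor $-(n-2)$, the Weyl conformal tensor of $M$), and Case 2 is an Einstein reduction followed by a direct appeal to Proposition \ref{pr3.2}.
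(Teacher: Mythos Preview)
Your proof is correct and follows essentially the same route as the paper: contract \eqref{condH} once, split on whether the combination $P=A+(n-2)B$ vanishes, and in the two branches obtain respectively conformal flatness (via Lemma \ref{lem3.1} applied to a multiple of the Weyl tensor) or an Einstein reduction followed by an appeal to Proposition \ref{pr3.2}. The only technical difference is the choice of contraction: the paper traces over $(X_1,X_2)$ and $(X_1,X_3)$ and combines the two to reach the proportionality $P(X_2)\,S(X_4,X_5)\propto g(X_4,X_5)$ directly, whereas you trace over $(X_3,X_5)$ to obtain the symmetric identity $(\ast)$ and then need two further scalar traces plus the $(0,2)$-tensor analogue of Lemma \ref{lem3.1} to force the Einstein condition; both routes are equivalent and your bookkeeping is in fact cleaner.
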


\noindent Proof: Contracting \eqref{condH} over $X_1$ and $X_2$, we get 
\begin{equation*}
R(X_A,X_5,X_3,X_4)+H(X_B,X_5,X_3,X_4)+G(X_D,X_5,X_3,X_4)=0,
\end{equation*}
where $X_A, X_B, X_D$ denote the associated vector fields of $A$, $B$ and $D$
respectively. Again contracting \eqref{condH} over $X_{1}$ and $X_{3}$,
we obtain 
\begin{eqnarray*}
&& D(X_2) (1-n)g(X_4,X_5) + B(X_2) \{(2-n) S(X_4,X_5)-r g(X_4,X_5)\}-A(X_2) S(X_4,X_5) \\
&&+ R(X_A,X_4,X_2,X_5)+ H(X_B,X_4,X_2,X_5)+G(X_D,X_4,X_2,X_5) = 0,
\end{eqnarray*}
which gives 
\begin{eqnarray}
 \ \{A(X_2)+(n-2)D(X_2)\} S(X_4,X_5) = \{(1-n)B(X_2)-r D(X_2)\} g(X_4,X_5). \label{eqsg}
\end{eqnarray}
Now two cases arise\newline
Case 1: Let $A+(n-2)D\ne 0$. Then the manifold becomes Einstein and hence %
\eqref{condH} becomes 
\begin{eqnarray*}
&&A(X_1) R(X_2,X_5,X_3,X_4)+ \left\{\frac{r}{n} B(X_1)+D(X_1)\right\} G(X_2,X_5,X_3,X_4)
\\
&& + A(X_2) R(X_1,X_5,X_3,X_4) + \left\{\frac{r}{n} B(X_2)+D(X_2)\right\}
G(X_1,X_5,X_3,X_4) = 0.  \notag
\end{eqnarray*}
Thus using Proposition \ref{pr3.3}, we can say that either $A=\frac{r}{n}
B+D=0$ or $M$ is of constant curvature.\newline
Case 2: Let $A+(n-2)D = 0$. Then from \eqref{eqsg}, we have $n B+D =0$. Thus
putting the expression of $A$ and $B$ in terms of $D$, we can write the
equation \eqref{condH} as 
\begin{eqnarray}
D(X_1) C(X_2,X_5,X_3,X_4) + D(X_2) C(X_1,X_5,X_3,X_4) = 0.
\end{eqnarray}
Thus using Lemma \ref{lem3.1}, we can say that either $M$ is of conformally
flat or $D=0$. Moreover then $A=(2-n)D=0$ and $B=-\frac{1}{n}D=0$. %
%%%%%%%%%%%%%%%%%%%%%%%%%%%%%%%%%%%%%%%%%%%%%%%%%%%%%%%%%%%%%%%%%%%%%%%%%%%%%%%%%%%%%%%%%%%%%%%%%%%%%%

\section{$(EWS)_{n}$}

%%%%%%%%%%%%%%%%%%%%%%%%
Using the symmetry properties of $R$, viz., $%
R(X_1,X_2,X_3,X_4)=-R(X_2,X_1,X_3,X_4)$ on \eqref{ewsng} we get 
\begin{eqnarray*}
&&\{B(X_1)-\bar B(X_1)\} R(X_2,X_5,X_3,X_4) + \{B(X_2)-\bar B(X_2)\}
R(X_1,X_5,X_3,X_4) \\
&&+ \{\beta(X_1)-\bar \beta(X_1)\} H(X_2,X_5,X_3,X_4) + \{\beta(X_2)-\bar
\beta(X_2)\} H(X_1,X_5,X_3,X_4) \\
&&+ \{\phi(X_1)-\bar \phi(X_1)\} G(X_2,X_5,X_3,X_4) + \{\phi(X_2)-\bar
\phi(X_2)\} G(X_1,X_5,X_3,X_4) = 0.
\end{eqnarray*}
Then using proposition \ref{pr3.3}, we can say that either the space is
conformally flat or constant curvature or generalized weakly symmetric or we
get the following relations between the associated 1-forms: 
\begin{equation*}
B=\bar B, \ D=\bar D, \ \beta =\bar \beta, \ \gamma=\bar\gamma \mbox{ and }
\phi = \bar\phi,
\end{equation*}
which leads us to state the following:

\begin{theo}
Suppose $M$ is a non-conformally flat proper extended weakly symmetric
manifold (i.e., not generalized weakly symmetric). The defining equation of
a $(EWS)_n$ is reduced as the following: 
\begin{eqnarray}  \label{rewsn}
&& (\nabla _{X_5}R)(X_1,X_2,X_3,X_4)=A(X_5)R(X_1,X_2,X_3,X_4) \\
&&+B(X_1)R(X_5,X_2,X_3,X_4)+B(X_2)R(X_1,X_5,X_3,X_4)  \notag \\
&&+D(X_3)R(X_1,X_2,X_5,X_4)+D(X_4)R(X_1,X_2,X_3,X_5)  \notag \\
&&+\alpha(X_5)H(X_1,X_2,X_3,X_4) +\beta(X_1)H(X_5,X_2,X_3,X_4)  \notag \\
&&+\beta(X_2)H(X_1,X_5,X_3,X_4)+\gamma(X_3)H(X_1,X_2,X_5,X_4)  \notag \\
&&+\gamma(X_4)H(X_1,X_2,X_3,X_5)+\theta(X_5)G(X_1,X_2,X_3,X_4)  \notag \\
&&+\phi(X_1)G(X_5,X_2,X_3,X_4)+\phi(X_2)G(X_1,X_5,X_3,X_4)  \notag \\
&&+\psi(X_3)G(X_1,X_2,X_5,X_4)+\psi(X_4)G(X_1,X_2,X_3,X_5).  \notag
\end{eqnarray}
\end{theo}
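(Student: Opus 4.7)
The preceding discussion has essentially executed half of the proof: by exploiting the first-pair antisymmetry $R(X_1,X_2,\cdot,\cdot) = -R(X_2,X_1,\cdot,\cdot)$, valid equally for $H=g\wedge S$ and $G=g\wedge g$, one obtains a relation of the exact form \eqref{condH} with coefficient 1-forms $B-\bar B$, $\beta-\bar\beta$, $\phi-\bar\phi$, and Proposition \ref{pr3.3} (combined with the hypotheses of non-conformal-flatness and properness) forces $B=\bar B$, $\beta=\bar\beta$, $\phi=\bar\phi$. My plan is to run the analogous argument using the last-pair antisymmetry to deduce the remaining identifications $D=\bar D$, $\gamma=\bar\gamma$, $\psi=\bar\psi$, and then substitute everything back into \eqref{ewsng}.

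Concretely, I will first permute $X_3\leftrightarrow X_4$ throughout \eqref{ewsng} and add the resulting identity to \eqref{ewsng}. The left-hand side vanishes since $\nabla R$ inherits the antisymmetry of $R$ in the last pair, and the same antisymmetry holds for the last pair of $H$ and $G$. On the right, every term whose 1-form argument is $X_5$, $X_1$ or $X_2$ is multiplied by a tensor that becomes its own negative under $X_3\leftrightarrow X_4$, hence cancels. What survives is a relation involving only the 1-form differences $D-\bar D$, $\gamma-\bar\gamma$, $\psi-\bar\psi$, each evaluated at $X_3$ or $X_4$, with $R$, $H$ and $G$ entries of the shape $T(X_1,X_2,X_5,X_4)$ and $T(X_1,X_2,X_5,X_3)$.

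To fit this into the template of Proposition \ref{pr3.3}, which places the 1-form slots in the first two arguments of the tensors, I will invoke the pair-exchange symmetry $T(X_1,X_2,X_3,X_4)=T(X_3,X_4,X_1,X_2)$ — shared by $R$, $H$ and $G$ — to transport the $X_3, X_4$ entries to the leading positions, and then relabel dummies. The resulting equation matches \eqref{condH} exactly, with the coefficient 1-forms $(A,B,D)$ there replaced by $(D-\bar D,\ \gamma-\bar\gamma,\ \psi-\bar\psi)$. Because $M$ is non-conformally flat (ruling out the conformally-flat and constant-curvature conclusions) and not generalized weakly symmetric (ruling out the exceptional linear-combination case $A=\tfrac{r}{n}B+D=0$), Proposition \ref{pr3.3} yields $D=\bar D$, $\gamma=\bar\gamma$, $\psi=\bar\psi$. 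Substituting these together with the first-pair identifications back into \eqref{ewsng} produces \eqref{rewsn}.

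The only genuinely delicate point I anticipate is the sign bookkeeping in the recasting step: one must track the signs introduced by the successive applications of first-pair antisymmetry, last-pair antisymmetry and pair exchange, and then verify that the three families of coefficients arising from $R$, $H$ and $G$ all align in the exact symmetric pattern demanded by Proposition \ref{pr3.3}. Once that alignment is confirmed, the remainder is a direct invocation of the hypotheses.
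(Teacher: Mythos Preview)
Your approach is essentially the paper's: exploit the antisymmetries of $R$ (shared by $H$ and $G$) on \eqref{ewsng} to produce a relation of the form \eqref{condH}, then invoke Proposition~\ref{pr3.3} together with the non-conformally-flat and properness hypotheses to force the barred and unbarred 1-forms to coincide. In fact you are slightly more careful than the paper's text, which displays only the first-pair computation and then asserts all of $B=\bar B$, $D=\bar D$, $\beta=\bar\beta$, $\gamma=\bar\gamma$, $\phi=\bar\phi$ (and tacitly $\psi=\bar\psi$); your explicit second application via the last-pair antisymmetry and pair-exchange symmetry is exactly what is needed to justify the $D$, $\gamma$, $\psi$ identifications.
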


Now contracting (\ref{rewsn}) we get 
\begin{eqnarray}
&&(\nabla _{X_{5}}S)(X_{2},\ X_{3})  \label{4a} \\
&=&f_{1}(X_{5})S(X_{2},\ X_{3})+f_{2}(X_{2})S(X_{5},\
X_{3})+f_{3}(X_{3})S(X_{2},X_{5})  \notag \\
&&+f_{4}(X_{5})g(X_{2},\ X_{3})+f_{_{5}}(X_{2})g(X_{5},\
X_{3})+f_{6}(X_{3})g(X_{2},\ X_{5})  \notag \\
&&-B(R(X_{2},X_{5})X_{3})+D(R(X_{5},\ X_{3})X_{2})+[\beta (LX_{5})+\gamma
(LX_{5})]g(X_{2},\ X_{3})  \notag \\
&&-\beta (LX_{2})g(X_{5},\ X_{3})-\gamma (LX_{3})g(X_{5},\ X_{2}),  \notag
\end{eqnarray}%
where $L$ is the Ricci operator and
\begin{eqnarray}
&&f_{1}=A+(n-2)\alpha +\beta +\gamma ,\ \ f_{2}=B+(n-3)\beta ,  \label{5a} \\
&&f_{3}=D+(n-3)\gamma ,\ \ f_{4}=(n-1)\theta +\phi +\psi +r\alpha ,  \notag
\\
&&f_{5}=(n-2)\phi +r\beta ,\ \ f_{6}=(n-2)\psi +r\gamma .  \notag
\end{eqnarray}%
Thus we can conclude that

\begin{theo}
Every ($EWS$)$_{n}$ is ($GWRS$)$_{n}$ if %
\begin{equation}
B(LX)+D(LX)+(n-1)[\beta(LX)+\gamma(LX)]=0.  \label{6a}
\end{equation}
\end{theo}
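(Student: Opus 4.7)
My plan is to work directly with the Ricci-contracted identity \eqref{4a}, viewing it as a $(GWRS)_n$ defining equation plus two kinds of correction terms, and to show that the hypothesis of the theorem is precisely the condition for the corrections to collapse. The $f_{1},\dots,f_{6}$ summands on the right-hand side of \eqref{4a} already have $(GWRS)_n$ shape. The four pieces $[\beta(LX_{5})+\gamma(LX_{5})]g(X_{2},X_{3})-\beta(LX_{2})g(X_{5},X_{3})-\gamma(LX_{3})g(X_{5},X_{2})$ are structurally of type ``1-form $\otimes\,g$'' and can immediately be absorbed into redefined 1-forms $\tilde f_{4}=f_{4}+(\beta+\gamma)\!\circ\!L$, $\tilde f_{5}=f_{5}-\beta\!\circ\!L$, $\tilde f_{6}=f_{6}-\gamma\!\circ\!L$. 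After this absorption, the only obstruction to \eqref{4a} being a $(GWRS)_n$ identity is the genuinely Riemann-valued residue $-B(R(X_{2},X_{5})X_{3})+D(R(X_{5},X_{3})X_{2})$.

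Next I would contract \eqref{4a} once more with the inverse metric $g^{X_{2}X_{3}}$. Using $\sum_{i}R(e_{i},X)e_{i}=-LX$ (a consequence of the pair-swap symmetry of $R$ and its antisymmetry in each pair), the Riemann residue traces to $B(LX_{5})+D(LX_{5})$, while the trace of the four $\beta$, $\gamma$ pieces identified above collapses via $n(\beta+\gamma)(LX_{5})-\beta(LX_{5})-\gamma(LX_{5})=(n-1)[\beta(LX_{5})+\gamma(LX_{5})]$. The combined ``extra'' contribution to the trace of \eqref{4a} is therefore exactly $[B+D+(n-1)(\beta+\gamma)](LX_{5})$, and the hypothesis of the theorem is precisely the vanishing of this extra trace.

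To promote this trace-level cancellation to the tensorial $(GWRS)_n$ identity, I would symmetrise \eqref{4a} in $X_{2}\leftrightarrow X_{3}$ --- legitimate since $\nabla S$ is symmetric in those slots --- which eliminates the antisymmetric part of the Riemann residue. The remaining symmetric part is of a form on which Proposition~\ref{pr3.3} (or the simpler Lemma~\ref{lem3.1}) can bite: under the trace constraint, these propositions force the residue to be reabsorbed into the $(GWRS)_n$-shaped terms, so that \eqref{4a} takes the clean form $(\nabla_{X_{5}}S)(X_{2},X_{3})=f_{1}(X_{5})S(X_{2},X_{3})+f_{2}(X_{2})S(X_{5},X_{3})+f_{3}(X_{3})S(X_{2},X_{5})+\tilde f_{4}(X_{5})g(X_{2},X_{3})+\tilde f_{5}(X_{2})g(X_{5},X_{3})+\tilde f_{6}(X_{3})g(X_{2},X_{5})$, which is the $(GWRS)_n$ defining equation.

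The hardest step is the last one: the hypothesis controls only the Ricci trace of the Riemann residue, whereas the residue itself is a full Riemann-valued tensor. Upgrading ``trace is zero'' to ``residue is $(GWRS)_n$-absorbable'' requires the full algebraic symmetries of $R$ --- especially the first Bianchi identity --- together with the Section~2 machinery. Careful sign bookkeeping in the Ricci contractions of $R$ is also essential, because the identification of the hypothesis with the vanishing of the extra trace depends delicately on $\sum_{i}R(e_{i},X)e_{i}$ producing the correct sign of $LX$.
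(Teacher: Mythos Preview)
Your first two paragraphs already contain the paper's entire argument. The paper simply observes that \eqref{4a} is the $(GWRS)_n$ defining relation once the block
\[
-B(R(X_{2},X_{5})X_{3})+D(R(X_{5},X_{3})X_{2})+[\beta(LX_{5})+\gamma(LX_{5})]g(X_{2},X_{3})-\beta(LX_{2})g(X_{5},X_{3})-\gamma(LX_{3})g(X_{5},X_{2})
\]
vanishes, and then contracts this vanishing over $X_{2},X_{3}$ to obtain \eqref{6a}. That is the whole proof in the paper; it does not absorb the $\beta(L\cdot)g$, $\gamma(L\cdot)g$ pieces into $\tilde f_{4},\tilde f_{5},\tilde f_{6}$ as you do (a harmless refinement), and it makes no attempt at the ``promotion'' you outline in your last two paragraphs.

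That promotion step has a genuine gap. The hypothesis \eqref{6a} controls only a single $1$-form, namely $(B+D+(n-1)(\beta+\gamma))\circ L$, whereas the Riemann residue $-B(R(X_{2},X_{5})X_{3})+D(R(X_{5},X_{3})X_{2})$ is a full curvature-valued trilinear form. Neither Lemma~\ref{lem3.1} nor Proposition~\ref{pr3.3} applies: both require an input of the algebraic shape $A(X_{1})K(X_{2},X_{5},X_{3},X_{4})+A(X_{2})K(X_{1},X_{5},X_{3},X_{4})$ with a \emph{single} $1$-form $A$ and a generalized curvature tensor $K$, but after symmetrising in $X_{2}\leftrightarrow X_{3}$ your residue mixes two independent $1$-forms $B$ and $D$ and contracts $R$ against them in interior slots, so it is not of that form. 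A scalar trace constraint cannot by itself force a curvature-valued $(0,3)$-tensor to be absorbable into the $(GWRS)_n$ template. Thus the paper establishes only that \eqref{6a} is the contracted consequence of the tensorial sufficient condition ``extra block $=0$''; your attempt to prove that \eqref{6a} alone is sufficient goes beyond the paper and cannot be completed with the tools you invoke.
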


\begin{proof}
From (\ref{4a}) it is obvious that ($EWS$)$_{n}$ becomes ($GWRS$)$_{n}$ if 
\begin{eqnarray*}
&&D(R(X,\ W)Y)+[\beta(LX)+\gamma(LX)]g(Y,\ W) \\
&=&\beta(LY)g(X,\ W)+\gamma(LW)g(X,\ Y)+B(R(Y,X)W),
\end{eqnarray*}%
which, on contraction, gives (\ref{6a}).
\end{proof}

%%%%%%%%%%%%%%%%%%%%%%%%%%%%%%%%%%%%%%%%%%%%%%%%%%%%%%%%%%%%%%%%%%%%%%%%%%%%%%%%%%%%%%%%%%%%%%%%%%%%%%
%\section{Form of $R$}
%%%%%%%%%%%%%%%%%%%%%%
Let us now consider the Bianchi's second identity 
\begin{equation*}
(\nabla _{X_{5}}R)(X_{1},X_{2},X_{3},X_{4})+(\nabla
_{X_{1}}R)(X_{2},X_{5},X_{3},X_{4})+(\nabla
_{X_{2}}R)(X_{5},X_{1},X_{3},X_{4})=0.
\end{equation*}%
Then using \eqref{rewsn} on the above we get 
\begin{eqnarray}
&&\omega
(X_{5})G(X_{1},X_{2},X_{3},X_{4})+\omega(X_{2})G(X_{5},X_{1},X_{3},X_{4})
\label{rfb2} \\
&&+\omega (X_{1})G(X_{2},X_{5},X_{3},X_{4})+\delta
(X_{5})H(X_{1},X_{2},X_{3},X_{4})  \notag \\
&&+\delta (X_{2})H(X_{5},X_{1},X_{3},X_{4})+\delta
(X_{1})H(X_{2},X_{5},X_{3},X_{4})  \notag \\
&&+J(X_{5})R(X_{1},X_{2},X_{3},X_{4})+J(X_{2})R(X_{5},X_{1},X_{3},X_{4}) 
\notag \\
&&+J(X_{1})R(X_{2},X_{5},X_{3},X_{4})=0,  \notag
\end{eqnarray}%
where $J=A-2B,\ \delta =\alpha -2\beta $ and $\omega =\theta -2\phi $.%
\newline
Now taking contraction over $X_{2}$ and $X_{3}$ on \eqref{rfb2}, we get 
\begin{eqnarray}  \label{Rf}
&& R(X_{J},X_{4},X_{1},X_{5}) \\
&&=\{(n-2)\omega (X_{5})+r\delta (X_{5})-S(X_{5},X_{\delta })\}g(X_{1},X_{4})
\notag \\
&&-\{(n-2)\omega (X_{1})+r\delta (X_{1})-S(X_{1},X_{\delta })\}g(X_{4},X_{5})
\notag \\
&&+\{J(X_{5})+(n-3)\delta (X_{5})\}S(X_{1},X_{4})  \notag \\
&&-\{J(X_{1})+(n-3)\delta (X_{1})\}S(X_{4},X_{5}).  \notag
\end{eqnarray}%
Further replacing $X_{5}$ by $X_{J}$ in \eqref{rfb2} and using the form of $%
R(X_{J},X_{4},X_{2},X_{5})$ from above, we get 
\begin{eqnarray*}
&&J(X_{J})R(X_{1},X_{2},X_{3},X_{4}) \\
&&= 2\omega
(X_{J})\{g(X_{2},X_{3})g(X_{1},X_{4})-g(X_{1},X_{3})g(X_{2},X_{4})\} \\
&&+2\delta (X_{J})\{g(X_{1},X_{4})S(X_{2},X_{3})+g(X_{2},X_{3})S(X_{1},X_{4})
\\
&&-g(X_{1},X_{3})S(X_{2},X_{4})-g(X_{2},X_{4})S(X_{1},X_{3})\} \\
&&+g(X_{1},X_{3})\{S(X_{J},X_{4})\delta (X_{2})+S(J,X_{2})\delta
(X_{4})+J(X_{4})\epsilon (X_{2})+J(X_{2})\epsilon (X_{4})\} \\
&&+g(X_{1},X_{4})\{-S(X_{J},X_{3})\delta (X_{2})-S(J,X_{2})\delta
(X_{3})+J(X_{3})\epsilon (X_{2})+J(X_{2})\epsilon (X_{3})\} \\
&&+g(X_{2},X_{3})\{-S(X_{J},X_{4})\delta (X_{1})-S(J,X_{1})\delta
(X_{4})+J(X_{4})\epsilon (X_{1})+J(X_{1})\epsilon (X_{4})\} \\
&&+g(X_{2},X_{4})\{S(X_{J},X_{3})\delta (X_{1})+S(J,X_{1})\delta
(X_{3})+J(X_{3})\epsilon (X_{1})+J(X_{1})\epsilon (X_{3})\} \\
&&+S(X_{1},X_{3})\{2J(X_{2})J(X_{4})+(n-2)J(X_{4})\delta
(X_{2})+(n-2)J(X_{2})\delta (X_{4})\} \\
&&+S(X_{1},X_{4})\{-2J(X_{2})J(X_{3})-(n-2)J(X_{3})\delta
(X_{2})-(n-2)J(X_{2})\delta (X_{3})\} \\
&&+S(X_{2},X_{3})\{-2J(X_{1})J(X_{4})-(n-2)J(X_{4})\delta
(X_{1})-(n-2)J(X_{1})\delta (X_{4})\} \\
&&+S(X_{2},X_{4})\{2J(X_{1})J(X_{3})+(n-2)J(X_{3})\delta
(X_{1})+(n-2)J(X_{1})\delta (X_{3})\},
\end{eqnarray*}%
where $\epsilon (X)=S(\delta,X)-(n-1)\omega (X)-r\delta (X)$.\\
Thus if $J$ is non-null, then we can write the form of the curvature
tensor $R$ as follows: 
\begin{equation}  \label{curvature}
R=\frac{1}{2J(X_{J})}\left[ 2\delta (X_{J})g\wedge S+2\omega (X_{J})g\wedge
g+E\wedge g+F\wedge S\right],
\end{equation}%
where 
\begin{equation*}
E(X_{1},X_{2})=S(X_{J},X_{1})\delta (X_{2})+S(X_J,X_{2})\delta
(X_{1})+J(X_{1})\epsilon (X_{2})+J(X_{2})\epsilon (X_{1}),
\end{equation*}%
\begin{equation*}
F(X_{1},X_{2})=2J(X_{2})J(X_{1})+(n-2)J(X_{1})\delta
(X_{2})+(n-2)J(X_{2})\delta (X_{1}).
\end{equation*}%
%
%%%%%%%%%%%%%%%%%%%%%%%%%%%%%%%%%%%%%%%%%%%%%%%%%%%%%%%%%%%%%%%%%%%%%%%%%%%%%%%%%%%%%%%%%%%%%%%%%%%
%%%%%%%%%%%%%%%%%%%%%%%%%%%%%%%%%%%%%%%%%%%%%%%%%%%%%%%%%%%%%%%%%%%%%%%%%%%%%%%%%%%%%%%%%%%%%%%%%%%%
Next, contracting the above we have%
\begin{eqnarray}
&&rJ(X_{2})J(X_{3})-\{J(X_{2})J(S(X_{J},X_{3}))+J(X_{3})J(S(X_{J},X_{2}))\}
\label{cnR} \\
&&-\frac{(n-1)(n-2)}{2}\{J(X_{3})w(X_{2})+J(X_{2})w(X_{3})\}  \notag \\
&&+(n-2)r\{J(X_{3})\delta (X_{2})+J(X_{2})\delta (X_{3})\}  \notag \\
&&-(n-2)\{J(X_{3})\delta (S(X_J,X_{2}))+J(X_{2})\delta (S(X_J,X_{3}))\}=0. 
\notag
\end{eqnarray}%
Again contracting \eqref{Rf}, we have 
\begin{eqnarray}  \label{scalar}
&&r[J(X_{3})+2(n-2)\delta (X_{3})]-2J(S(X_{J},X_{3}))+(n-1)(n-2)w(X_{3}) \\\nonumber
&&-2(n-2)\delta (S(X_J,X_{3}))=0.
\end{eqnarray}%
Thus we state the following:

\begin{theo}
In a $(EWS)_{n}$ the Riemann curvature tensor and scalar curvature are given
by (\emph{\ref{curvature}}) and (\emph{\ref{scalar}}) respectively, provided 
$J$ is non-null.
\end{theo}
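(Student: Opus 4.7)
The plan is to start from the second Bianchi identity
$(\nabla_{X_5}R)(X_1,X_2,X_3,X_4) + (\nabla_{X_1}R)(X_2,X_5,X_3,X_4) + (\nabla_{X_2}R)(X_5,X_1,X_3,X_4) = 0$
and substitute the reduced defining relation \eqref{rewsn} for each of the three covariant derivatives. Many of the resulting fifteen-per-term expressions combine pleasantly: the $A$ and $B$ contributions on the $R$-piece reorganize into a cyclic sum in $X_5, X_1, X_2$ with coefficient $J=A-2B$, and analogously the $\alpha,\beta$ contributions collapse to $\delta=\alpha-2\beta$ on the $H$-piece and the $\theta,\phi$ contributions to $\omega=\theta-2\phi$ on the $G$-piece. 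The remaining terms, which involve differentiations in the "inside" slots $X_3,X_4$ (the $D,\bar D,\gamma,\bar\gamma,\psi,\bar\psi$ terms), cancel in cyclic triples via the first Bianchi identity applied to $R$, $H$ and $G$. This is precisely equation \eqref{rfb2}.

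Next I would contract \eqref{rfb2} over $X_2$ and $X_3$. The contractions of $G$ and $H$ produce linear combinations of $g$, $S$ and the scalar curvature $r$ with the 1-forms $J,\delta,\omega$, while the $R$-terms leave behind a single curvature expression $R(X_J,X_4,X_1,X_5)$. Solving for this term, using the standard Bianchi identity and the skew symmetries of $R$, yields \eqref{Rf}.

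The decisive step is to return to \eqref{rfb2} and set $X_5=X_J$ (the vector field metrically dual to $J$). This isolates the term $J(X_J)R(X_1,X_2,X_3,X_4)$, while the two other $R$-terms that appear carry a $X_J$-slot and so can be replaced verbatim by the already-derived expression \eqref{Rf}, and the $G,H$ wedge-products simply pick up scalar coefficients $2\omega(X_J),2\delta(X_J)$. Collecting the $g$ and $S$ factors in the four variables $X_1,X_2,X_3,X_4$ into the wedge products $g\wedge S$, $g\wedge g$, $E\wedge g$ and $F\wedge S$ with $E,F$ defined exactly as in the statement, and dividing by $J(X_J)\neq 0$ under the non-null hypothesis, produces \eqref{curvature}. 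The formula \eqref{scalar} then follows by a further trace: contracting \eqref{Rf} over $X_1$ and $X_4$ reduces the left side to $S(X_J,X_5)$ times the identity and the right side to the linear combination of $J,\delta,\omega,r$ displayed in \eqref{scalar}.

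The main obstacle is entirely bookkeeping: making sure that every symmetry of $R$, $H$ and $G$ is used with the correct sign when rewriting the cyclic Bianchi sum, and that the replacement of the three $R(X_J,\cdot,\cdot,\cdot)$ occurrences by \eqref{Rf} is done with consistent slot conventions. A single misplaced sign or a mis-identified skew-symmetry propagates linearly through the remaining expansion, so I would cross-check the derivation by verifying that both sides of \eqref{curvature} satisfy the pair symmetry and the first Bianchi identity of $R$ before declaring the calculation complete.
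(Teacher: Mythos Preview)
Your proposal is correct and follows essentially the same route as the paper: apply the second Bianchi identity to \eqref{rewsn} to obtain \eqref{rfb2}, contract over $X_2,X_3$ to get \eqref{Rf}, then set $X_5=X_J$ in \eqref{rfb2} and eliminate the remaining $R(X_J,\cdot,\cdot,\cdot)$ terms via \eqref{Rf} to reach \eqref{curvature}, with \eqref{scalar} coming from a further contraction of \eqref{Rf}. Your explicit observation that the $D,\gamma,\psi$ terms drop out because $R$, $H$ and $G$ each satisfy the first Bianchi identity is exactly the mechanism the paper uses implicitly (note that in the reduced equation \eqref{rewsn} the barred forms have already been identified with the unbarred ones).
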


%%%%%%%%%%%%%%%%%%%%%%%%%%%%%%%%%%%%%%%%%%%%%%%%%%%%%%%%%%%%%%%%%%%%%%%%%%%%%%%%%%%%%%%%%%%%%%%%%%%%
%\section{R-harmonic (EWS)$_{n}$}
%%%%%%%%%%%%%%%%%%%%%%%%%%%%%%%%%%%%%%%%%%%%%%%%%%%%%%%%%%%%%%%%%%%%%%%%%%%%%%%%%%%%%%%%%%%%%%%%%%%%%
%%%%%%%%%%%%%%%%%%%%%%%%%%%%%%%%%%%%%%%%%%%%%%%%%%%%%%%%%%%%%%%%%%%%%%%%%%%%%%%%%%%%%%%%%%%%%%%%%%%%%

\section{Example}

%=================
\textbf{Example}: Let $(\mathbb{R}^{4},g)$ be a $4$-dimensional space endowed
with the metric $g$ given by 
\begin{equation}  \label{met1}
ds^{2}=(dx_{1})^{2}+x_1(dx_{2})^{2}+x_4 (dx_{3})^{2}+x_3(dx_{4})^{2}.
\end{equation}%
The non-zero components of Riemann curvature $R$, Ricci tensor $S$, Gaussian
curvature tensor $G$, and $g\wedge S$ are 
\begin{equation*}
R_{1212}=\frac{1}{4 x_1}, \ \ R_{3434}=\frac{x_3+x_4}{4 x_3 x_4};
\end{equation*}
\begin{equation*}
S_{11}=-\frac{1}{4 x_1^2}, \ S_{22}=-\frac{1}{4 x_1}, \ S_{33}=-\frac{x_3+x_4%
}{4 x_3^2 x_4}, \ S_{44}=-\frac{x_3+x_4}{4 x_3 x_4^2};
\end{equation*}
\begin{equation*}
G_{1212}=-2 x_1, \ G_{1313}=-2 x_4, \ G_{1414}=-2 x_3,
\end{equation*}
\begin{equation*}
G_{2323}=-2 x_1 x_4, \ G_{2424}=-2 x_1 x_3, \ G_{3434}=-2 x_3 x_4;
\end{equation*}
\begin{equation*}
(g\wedge S)_{1212}=\frac{1}{2 x_1}, \ \ (g\wedge S)_{1313}=\frac{x_3
x_1^2+x_4 x_1^2+x_3^2 x_4^2}{4 x_1^2 x_3^2 x_4},
\end{equation*}
\begin{equation*}
(g\wedge S)_{1414}=\frac{x_3 x_1^2+x_4 x_1^2+x_3^2 x_4^2}{4 x_1^2 x_3 x_4^2}%
, \ \ (g\wedge S)_{2323}=\frac{x_3 x_1^2+x_4 x_1^2+x_3^2 x_4^2}{4 x_1 x_3^2
x_4},
\end{equation*}
\begin{equation*}
(g\wedge S)_{2424}=\frac{x_3 x_1^2+x_4 x_1^2+x_3^2 x_4^2}{4 x_1 x_3 x_4^2},
\ \ (g\wedge S)_{3434}=\frac{x_3+x_4}{2 x_3 x_4}.
\end{equation*}
Covariant derivatives of Riemann curvature tensor and Ricci tensor are 
\begin{equation*}
R_{1212,1}=-\frac{1}{2 x_1^2}, \ R_{3434,3}=\frac{x_3+2 x_4}{4 x_3^2 x_4}, \
R_{3434,4}=\frac{2 x_3+x_4}{4 x_3 x_4^2};
\end{equation*}
\begin{equation*}
S_{11,1}=\frac{1}{2 x_1^3}, \ S_{22,1}=\frac{1}{2 x_1^2}, \ S_{33,3}=\frac{%
x_3+2 x_4}{4 x_3^3 x_4},
\end{equation*}
\begin{equation*}
S_{33,4}=\frac{2 x_3+x_4}{4 x_3^2 x_4^2}, \ S_{44,3}=\frac{x_3+2 x_4}{4
x_3^2 x_4^2}, \ S_{44,4}=\frac{2 x_3+x_4}{4 x_3 x_4^3}.
\end{equation*}
Then it can be easily check that the space satisfies \eqref{rewsn} for the
following choice of the 1-forms 
\begin{equation*}
A=\left\{\frac{2 x_3^2 x_4^2}{x_{1}\lambda_{1}},0,-\frac{x_1^2 \left(x_3+2
x_4\right)}{x_3 \lambda_{1}},-\frac{x_1^2 \left(2 x_3+x_4\right)}{x_4
\lambda_{1}}\right\},
\end{equation*}
\begin{equation*}
B=\left\{\frac{2 x_1 x_3^2 x_4^2 \left(x_3+x_4\right)}{\lambda_{2}},0,-\frac{%
x_1^2 x_3 x_4^2 \left(x_3+2 x_4\right)}{\lambda_{2}},-\frac{x_1^2 x_3^2 x_4
\left(2 x_3+x_4\right)}{\lambda_{2}}\right\},
\end{equation*}
\begin{equation*}
D=\left\{-\frac{x_3+x_4}{4 x_{1}\lambda_{3}},0,\frac{x_3+2 x_4}{8 x_3
\lambda_{3}},\frac{2 x_3+x_4}{8 x_4 \lambda_{3}}\right\},
\end{equation*}
\begin{equation*}
\alpha=\left\{\frac{\lambda_{3}}{x_1^2 \left(x_3+x_4\right)}%
,0,0,0\right\}=-\theta,
\end{equation*}
\begin{equation*}
\beta=\left\{1,0,0,0\right\}=-\phi, \ \ \gamma=\left\{\frac{\lambda_{1}}{%
8x_{1}^{2}x_3^2 x_4^2},0,0,0\right\}=-\psi,
\end{equation*}
where $\lambda_{1}=x_3^2 x_4^2+x_1^2\left(x_3+x_4\right)$, $%
\lambda_{2}=x_1^4 \left(x_3+x_4\right)^2-x_3^4 x_4^4$ and $\lambda_{3}=x_3^2
x_4^2-x_1^2 \left(x_3+x_4\right)$. 
%%%%%%%%%%%%%%%%%%%%%%%%%%%%%%%%%%%%%%%%%%%%%%%%%%%%%%%%%%%%%%%%%%%%%%%%%%%%%%%%%%%%%%%%%%%%%%%%%%%%
%%%%%%%%%%%%%%%%%%%%%%%%%%%%%%%%%%%%%%%%%%%%%%%%%%%%%%%%%%%%%%%%%%%%%%%%%%%%%%%%%%%%%%%%%%%%%%%%%%%%

\end{document}